\newtheorem{theorem}{Theorem}[section]
\newtheorem{lemma}[theorem]{Lemma}
\theoremstyle{definition}
\theoremstyle{remark}
\DeclareMathOperator{\diam}{diam}
\begin{document}
\title[]{Amenable semigroups of nonexpansive mappings on
weakly compact convex sets}
\author[]{Andrzej Wi\'{s}nicki}
\dedicatory{ Dedicated to Professor Tom\'{a}s Dom\'{\i}nguez Benavides \\
on the occasion of his 65th birthday}

\begin{abstract}
We show a few fixed point theorems for semigroups acting on weakly compact
convex subsets of Banach spaces when $LUC(S),$ $AP(S),WAP(S)$ or $WAP(S)\cap
LUC(S)$ have a left invariant mean. In particular, we give a
characterization of semitopological semigroups that have a left invariant
mean on the space of weakly almost periodic functions in terms of a fixed
point property for nonexpansive mappings. It answers, in the case of Banach
spaces, Question 4 of [A.T.-M. Lau, Y. Zhang, J. Funct. Anal. 263 (2012),
2949--2977] in affirmative. We also extend in Banach spaces the fixed point
theorem of R. Hsu from left reversible discrete semigroups to left amenable
semitopological semigroups.
\end{abstract}

\subjclass[2010]{ Primary 47H10; Secondary 20M30, 28C10, 37C25, 43A07 47H09,
47H20, 54H25}
\keywords{Amenable semigroup, Topological semigroup, Fixed point property,
Semigroup action, Nonexpansive mapping, Invariant mean, Almost periodic
function, Weakly compact set, Radon measure}
\address{Andrzej Wi\'{s}nicki, Department of Mathematics, Rzesz\'{o}w
University of Technology, Al. Powsta\'{n}c\'{o}w Warszawy 12, 35-959 Rzesz%
\'{o}w, Poland}
\email{awisnicki@prz.edu.pl}
\date{}
\maketitle



\section{Introduction}

A well-known characterization of amenable semigroups is given by the
following Day fixed point theorem: a semigroup $S$ is (left) amenable if and
only if whenever $S$ acts affinely (from the left) on a nonempty compact
convex subset $K$ of a locally convex space, there is a common fixed point
of $S$ in $K.$ This characterization was extended to topological groups by
N. Rickert and to semitopological semigroups by T. Mitchell.

A natural question arises whether a similar characterization can be given in
terms of nonexpansive mappings. The first results in this direction were
given by W. Takahashi, T. Mitchell and A. T.-M. Lau. The following fixed
point property is a corollary of \cite[Theorem 4.1]{La} (where a more
general case of locally convex spaces was considered).

\begin{theorem}
\label{APcomp}Let S be a semitopological semigroup. Then $AP(S)$, the space
of continuous almost periodic functions on $S$, has a LIM (left invariant
mean) if and only if $S$ has the following fixed point property:

\begin{enumerate}
\item[$(D)$:] Whenever the action of $S$ on a compact convex subset $K$ of a
Banach space is separately continuous and nonexpansive, there is a common
fixed point of $S$ in $K.$
\end{enumerate}
\end{theorem}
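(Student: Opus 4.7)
The first and simplest plan is to invoke the reference: the statement is flagged in the excerpt as a corollary of \cite[Theorem 4.1]{La}, which handles the more general locally convex setting, so the Banach space version of $(D)$ follows by direct specialization (nonexpansiveness with respect to the norm is a special case of nonexpansiveness with respect to a continuous seminorm system). The remainder of this proposal sketches a self-contained argument for both directions.

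For the implication ``LIM on $AP(S)\Rightarrow (D)$'': given a separately continuous nonexpansive action of $S$ on a compact convex $K\subseteq X$, I would first show that for every $x\in K$ and every $\varphi\in X^{\ast}$ the orbit function $f_{x,\varphi}(s):=\varphi(sx)$ lies in $AP(S)$. The key point is equicontinuity: since $\{T_{t}\}_{t\in S}$ is uniformly $1$-Lipschitz on the compact set $K$, the induced family $g_{t}(y):=\varphi(ty)$ is uniformly bounded and $\|\varphi\|$-Lipschitz on $K$, so by Arzel\`a--Ascoli it is relatively compact in $C(K)$; pulling back through the orbit map $s\mapsto sx$ then shows that the left translates $L_{t}f_{x,\varphi}(s)=g_{t}(sx)$ form a relatively compact subset of $C_{b}(S)$, which is the definition of almost periodicity. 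Given a LIM $m$, I would define a barycenter-type map $P:K\to K$ by $\varphi(Px):=m_{s}(\varphi(sx))$; that $Px$ lands in $K$ follows from Hahn--Banach together with the facts that $m$ is a positive normalized functional and $\overline{\conv}(Sx)\subseteq K$ is weakly closed.

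The main obstacle here is that, because each $T_{t}$ is only nonexpansive (not affine), $P$ does not obviously intertwine the $S$-action, so $Px$ need not itself be fixed. I would resolve this by passing (via Zorn) to a minimal nonempty $S$-invariant closed convex subset $K_{0}\subseteq K$ and arguing that equicontinuity of $\{T_{s}|_{K_{0}}\}$ makes its enveloping semigroup a compact subsemigroup of $C(K_{0},K_{0})$; coupling left invariance of $m$ with this compact semigroup structure should then force $K_{0}$ to be a singleton, yielding the common fixed point.

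For the converse ``$(D)\Rightarrow$ LIM on $AP(S)$'', I would run the construction intrinsic to $AP(S)$. For each $f\in AP(S)$ let $K_{f}:=\overline{\conv}\{L_{s}f:s\in S\}$ in $C_{b}(S)$ with the supremum norm; by the very definition of almost periodicity the orbit $\{L_{s}f\}$ is relatively norm-compact, and hence $K_{f}$ is a compact convex subset of the Banach space $C_{b}(S)$ by Mazur's theorem. The left-translation action of $S$ on $K_{f}$ is isometric (in particular nonexpansive) and separately continuous (using $AP(S)\subseteq LUC(S)$), so $(D)$ produces a common fixed point $g^{\ast}\in K_{f}$, which is a left-translation-invariant element of $K_{f}$. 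Extracting a LIM on $AP(S)$ from such $g^{\ast}_{f}$ then follows from a standard coherence argument (either taking a common fixed point in the finite product $\prod_{f\in F}K_{f}$ for each finite $F\subseteq AP(S)$ and passing to a weak$^{\ast}$ cluster point of the evaluation functionals $f\mapsto g^{\ast}_{F}(s_{0})$, or directly recognising $g^{\ast}$ as the constant $m(f)$ via the compact-semigroup structure of $S^{AP}$).
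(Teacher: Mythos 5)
Your fallback route (invoking \cite[Theorem 4.1]{La}) is exactly what the paper does: Theorem \ref{APcomp} is stated there as a corollary of that reference, with no independent proof given. One caveat even on that route: direct specialization only handles the implication ``LIM $\Rightarrow (D)$'' (a Banach space is a locally convex space, so the locally convex fixed point property implies $(D)$). For the converse ``$(D)\Rightarrow$ LIM'' one must additionally observe that the witness set in Lau's converse construction lives in a sup-norm (hence Banach) space, so that the weaker hypothesis $(D)$ suffices; your self-contained converse sketch does supply this observation, and that sketch is essentially the argument the paper uses later for Theorem \ref{AP}: form $K_{f}$ as the closed convex hull of the translation orbit (norm-compact by almost periodicity and Mazur), apply the fixed point property to the translation action, note the fixed point is a constant function, and conclude via the Granirer--Lau criterion (Theorem \ref{Intr}, \cite{GrLa}) --- which is precisely your ``standard coherence argument'' and should be cited rather than waved at. Two details to repair there: to get a genuine \emph{left} action of $S$ satisfying $\psi(s,\psi(s',x))=\psi(ss',x)$ you must act by the translations $g\mapsto g(\cdot\, s)$ (the paper's $r_{s}$); with the opposite choice you obtain an action of $S^{\mathrm{op}}$, to which $(D)$ does not apply, and the constant you produce certifies a \emph{right} invariant mean. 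Also the evaluation at the identity requires the reduction to monoids (\cite[Lemma 6.3]{LaZh2}).

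The genuine gap is in your self-contained forward direction. Your Arzel\`a--Ascoli argument that orbit functions lie in $AP(S)$ is fine (it is \cite[Lemma 3.1]{La}, and it extends from functionals $\varphi\in X^{*}$ to all $f\in C(K)$, which you will need), and you correctly identify that the barycenter map $P$ fails because the maps $T_{s}$ are not affine. But your proposed repair --- ``equicontinuity makes the enveloping semigroup compact; coupling left invariance of $m$ with this compact semigroup structure should then force $K_{0}$ to be a singleton'' --- is not an argument; it is a restatement of the goal, and it is exactly the hard core of the theorem. The known mechanism (Lau's proof, and the paper's Lemma \ref{Lem} and Theorem \ref{Main}) is entirely different: use the LIM to define the positive functional $\Phi(f)=m(f_{y})$ on \emph{all} of $C(K)$, represent $\Phi$ by an $S$-invariant Radon probability measure $\mu$ via Riesz, take $K_{0}=\mathrm{supp}(\mu)$ to obtain a nonempty set with $sK_{0}=K_{0}$ for all $s$, and then, on a minimal closed convex invariant $C$ and minimal closed invariant $F\subset C$, invoke DeMarr's lemma \cite{De} (normal structure of norm-compact convex sets) to build a proper closed convex invariant subset of $C$ unless $\diam F=0$. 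None of this measure/support/normal-structure machinery appears in your sketch, and I do not see how invariance of the mean alone, fed through the enveloping semigroup, would substitute for it: the mean acts on functions on $S$, not on $K_{0}$, and transferring its invariance to $K_{0}$ is precisely what the Riesz representation step accomplishes. As written, the forward implication is unproved.
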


This article is motivated by the recent papers \cite{LaZh1, LaZh2}, where
the similar results were obtained for $WAP(S),$ the space of continuous
weakly almost periodic functions on $S,$ and $LUC(S)$, the space of left
uniformly continuous functions on $S,$ under the additional assumption that $%
S$ is separable (or at least the set $K$ on which $S$ acts is separable). We
show that it is possible to omit the separability assumption when $S$ acts
on a weakly compact convex subset of a Banach space. Thus we give a complete
characterization of a sentence \textquotedblleft $WAP(S)$ has a
LIM\textquotedblright\ in terms of a fixed point property for nonexpansive
mappings that answers, in the case of Banach spaces, Question 4 in \cite%
{LaZh2} in affirmative. A similar method allows us to strengthen Theorem \ref%
{APcomp} above. Moreover, we obtain the fixed point theorems for semigroups
of nonexpansive mappings when $LUC(S)$ or $WAP(S)\cap LUC(S)$ have a left
invariant mean. In particular, the fixed point theorem of Hsu \cite{Hsu}
(see also \cite[Theorem 3.10]{LaZh2}) is extended from left reversible
discrete semigroups to left amenable semitopological semigroups acting on
weakly compact convex subsets of Banach spaces.

\section{Preliminaries}

Let $S$ be a semitopological semigroup, i.e., a semigroup with a Hausdorff
topology such that the mappings $S\ni s\rightarrow ts$ and $S\ni
s\rightarrow st$ are continuous for each $t\in S.$ Notice that every
semigroup can be equipped with the discrete topology and then it is called a
discrete semigroup. Let $\ell ^{\infty }(S)$ be the Banach space of bounded
complex-valued functions on $S$ with the supremum norm. For $s\in S$ and $%
f\in \ell ^{\infty }(S),$ we define the left and right translations of $f$
in $\ell ^{\infty }(S)$ by%
\begin{equation*}
l_{s}f(t)=f(st)\text{ and }r_{s}f(t)=f(st)
\end{equation*}%
for every $t\in S.$ Let $X$ be a closed linear subspace of $\ell ^{\infty
}(S)$ containing constants and invariant under translations, i.e., $%
l_{s}(X)\subset X$ and $r_{s}(X)\subset X.$ Then a linear functional $m\in
X^{\ast }$ is called a left invariant mean on $X$ (LIM, for short), if $%
\left\Vert \mu \right\Vert =\mu (1)=1$ and%
\begin{equation*}
\mu (l_{s}f)=\mu (f)
\end{equation*}%
for each $s\in S$ and $f\in X.$ Similarly, we can define a right invariant
mean.

Denote by $C(S)$ the closed subalgebra of $\ell ^{\infty }(S)$ consisting of
continuous functions and let $LUC(S)$ be the space of left uniformly
continuous functions on $S,$ i.e., all $f\in C(S)$ such that the mapping $%
S\ni s\rightarrow l_{s}f$ from $S$ to $C(S)$ is continuous when $C(S)$ has
the sup norm topology. A semigroup $S$ is called left amenable if there
exists a left invariant mean on $LUC(S).$ Left uniformly continuous
functions are often called in the literature \textquotedblleft right
uniformly continuous\textquotedblright\ and vice-versa.

Two other subspaces of $C(S)$ are very important in this context. A bounded
continuous function $f$ on $S$ is called almost periodic if $\{l_{s}f:s\in
S\}$ (equivalently, $\{r_{s}f:s\in S\}$) is relatively compact in the norm
topology of $C(S).$ A bounded continuous function $f$ on $S$ is called
weakly almost periodic if $\{l_{s}f:s\in S\}$ (equivalently, $\{r_{s}f:s\in
S\}$) is relatively compact in the weak topology of $C(S).$ The space of
almost periodic (resp., weakly almost periodic) functions on $S$ is denoted
by $AP(S)$ (resp., $WAP(S)$). In general,
\begin{equation*}
AP(S)\subset LUC(S)\text{ and }AP(S)\subset WAP(S),
\end{equation*}%
and if $S$ is discrete, then%
\begin{equation*}
AP(S)\subset WAP(S)\subset LUC(S)=\ell ^{\infty }(S).
\end{equation*}

Let $K$ be a topological space. A semigroup $S$ is said to act on $K$ (from
the left) if there is a map%
\begin{equation*}
\psi :S\times K\rightarrow K
\end{equation*}%
such that
\begin{equation*}
\psi \left( s,\psi \left( s^{\prime },x\right) \right) =\psi \left(
ss^{\prime },x\right)
\end{equation*}%
for all $s,s^{\prime }\in S$ and $x\in K$. We write $\psi \left( s,x\right)
=s\cdot x=T_{s}x.$ The action is said to be separately continuous if it is
continuous in each of the variables when the other is fixed. If $K$ is a
subset of a Banach space $E$, the action $\cdot $ is called nonexpansive if
\begin{equation*}
\left\Vert s\cdot x-s\cdot y\right\Vert \leq \left\Vert x-y\right\Vert
\end{equation*}%
for every $x,y\in K$ and $s\in S.$ Given an action $\cdot $ of $S$, an
element $x\in K$ is called a common fixed point for $S$ in $K$ if $s\cdot
x=x $ for every $s\in S.$

If $X$ is a closed linear subspace of $\ell ^{\infty }(S)$ invariant under
translations then $X$ is said to be (left) introverted if for any $\varphi
\in X^{\ast }$ and $f\in X,$ the function $h(s)=\varphi (l_{s}f),s\in S,$
belongs to $X.$ It is known that $AP(S),WAP(S)$ and $LUC(S)$ are introverted
subspaces of $\ell ^{\infty }(S)$ (see, e.g., \cite[Prop. 2.11]{Pa}). Since
we work in Banach spaces rather than in locally convex topological spaces we
shall need the following theorem of Granirer and Lau (compare \cite[Theorem 1%
]{GrLa}, \cite[Remark 6.4]{LaZh2}, and, in the case of locally compact
groups, \cite[Theorem 2.13]{Pa}).

\begin{theorem}
\label{Intr}Let $X$ be an introverted subspace of $\ell ^{\infty }(S)$ with $%
1\in X.$ Then $X$ has a LIM if and only if $\overline{co}^{p}\{r_{s}f:s\in
S\}$, the pointwise closure of the convex hull of the right orbit of $f$,
contains a constant function for every $f\in X.$
\end{theorem}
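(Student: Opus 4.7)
My plan is to prove both directions of the equivalence: the forward direction by a Hahn--Banach separation argument in the pointwise topology, and the reverse direction by an induction on finite subsets of $X$ that crucially exploits introvertedness to compose means.

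For the forward direction, suppose $m$ is a LIM on $X$, fix $f\in X$, and let $C=\overline{co}^{p}\{r_{s}f:s\in S\}$. Assume for contradiction that the constant function equal to $m(f)$ does not lie in $C$. Since $C$ is convex and closed in the pointwise topology of $\ell^{\infty}(S)$---a locally convex topology whose continuous dual consists of finitely supported measures on $S$---Hahn--Banach separation yields $t_{1},\dots,t_{n}\in S$, scalars $\alpha_{i}$, and $\varepsilon>0$ with
$$
\operatorname{Re}\sum_{i=1}^{n}\alpha_{i}\,g(t_{i})\;\geq\;\operatorname{Re}\Bigl(m(f)\sum_{i=1}^{n}\alpha_{i}\Bigr)+\varepsilon,\qquad g\in C.
$$
Specializing to $g=r_{s}f$ (so $g(t_{i})=f(t_{i}s)$) and setting $h:=\sum_{i}\alpha_{i}\,l_{t_{i}}f\in X$, we obtain $\operatorname{Re} h(s)\geq\operatorname{Re}(m(f)\sum_{i}\alpha_{i})+\varepsilon$ for every $s\in S$. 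Applying the mean $m$ yields $\operatorname{Re} m(h)\geq\operatorname{Re}(m(f)\sum_{i}\alpha_{i})+\varepsilon$, but linearity and left invariance of $m$ force $m(h)=\sum_{i}\alpha_{i}m(l_{t_{i}}f)=m(f)\sum_{i}\alpha_{i}$, a contradiction.

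For the reverse direction, the first step is to upgrade the hypothesis to an exact form via weak-$*$ compactness of the set of means $M(X)\subset X^{*}$. Given $f\in X$, the pointwise-convergent convex combinations $\sum_{j}\lambda_{j,\alpha}\,r_{t_{j,\alpha}}f\to c_{f}$ correspond to finite means $\nu_{\alpha}\in M(X)$, and any weak-$*$ cluster point $\mu_{f}$ satisfies $\mu_{f}(l_{s}f)=c_{f}$ for every $s\in S$ (since $l_{s}f\in X$); composing with a fixed left translation $g\mapsto\mu_{f}(l_{s_{0}}g)$ produces a mean $\tilde\mu_{f}$ with $\tilde\mu_{f}(l_{s}f)=\tilde\mu_{f}(f)$ for all $s$. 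The heart of the argument is an induction on $n=|F|$ showing that for every finite $F=\{f_{1},\dots,f_{n}\}\subset X$ there exists a mean $\mu_{n}\in M(X)$ left-invariant on each $f_{i}$. For the inductive step, $\phi(t):=\mu_{n}(l_{t}f_{n+1})$ lies in $X$ by introvertedness, so the upgraded hypothesis supplies a mean $\mu'$ left-invariant on $\phi$. Define
$$
\mu(g):=\mu'\bigl(t\mapsto \mu_{n}(l_{t}g)\bigr),
$$
which is a well-defined mean (by introvertedness, positivity, and the obvious normalization). Using $l_{t}l_{s}=l_{st}$, for $i\leq n$ we compute $\mu(l_{s}f_{i})=\mu'(t\mapsto\mu_{n}(l_{st}f_{i}))=\mu'(t\mapsto\mu_{n}(f_{i}))=\mu_{n}(f_{i})=\mu(f_{i})$, while $\mu(l_{s}f_{n+1})=\mu'(t\mapsto\phi(st))=\mu'(l_{s}\phi)=\mu'(\phi)=\mu(f_{n+1})$ by the left-invariance of $\mu'$ on $\phi$.

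The conclusion follows by a finite-intersection argument: the sets $L_{F}:=\{\mu\in M(X):\mu(l_{s}f)=\mu(f)\text{ for all }s\in S,\ f\in F\}$ are weak-$*$ closed in $M(X)$, non-empty for every finite $F$ by the induction, and satisfy $L_{F_{1}}\cap L_{F_{2}}=L_{F_{1}\cup F_{2}}$, so weak-$*$ compactness of $M(X)$ produces a point in $\bigcap_{F}L_{F}$, which is a LIM. I expect the main obstacle to be the inductive step in simultaneous averaging: naive iterations of the averaging hypothesis encounter a circular dependence between the supports of the approximating means and the finite test sets on which invariance is required. It is precisely introvertedness that breaks this circularity, by placing each intermediate function $\phi$ back into $X$ and thereby permitting the Arens-style composition $\mu'\odot\mu_{n}$ to inherit previously established invariances \emph{exactly}, not merely approximately.
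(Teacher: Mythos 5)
Your proof is correct, but note that the paper itself does not prove Theorem~\ref{Intr}: it is quoted as a known result of Granirer and Lau, with pointers to \cite[Theorem 1]{GrLa}, \cite[Remark 6.4]{LaZh2} and \cite[Theorem 2.13]{Pa}, so the only possible comparison is with the cited literature rather than with an in-paper argument. Against that benchmark, what you wrote is essentially the classical Granirer--Lau argument, reconstructed correctly and self-contained. The necessity direction via Hahn--Banach separation in the product topology of $\mathbb{C}^{S}$ (whose dual consists of finitely supported functionals) is sound; the one step you pass over silently is that $\operatorname{Re}h\geq\delta$ pointwise implies $\operatorname{Re}m(h)\geq\delta$, which for a merely unital norm-one functional on a not necessarily self-adjoint subspace requires the standard observation that $m(h)$ lies in the closed convex hull of the range of $h$ (every closed disc containing $h(S)$ contains $m(h)$, since $\Vert h-z_{0}1\Vert\leq r$ gives $\vert m(h)-z_{0}\vert\leq r$); this is worth one sentence. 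Your sufficiency direction is the more delicate half and you handle its two genuine pitfalls correctly: first, since $S$ need not have an identity, a weak-$*$ cluster point $\mu_{f}$ of the approximating finite means only satisfies $\mu_{f}(l_{s}f)=c_{f}$ for all $s$, not $\mu_{f}(f)=c_{f}$, and your precomposition $g\mapsto\mu_{f}(l_{s_{0}}g)$ (using $l_{s_{0}}l_{s}=l_{ss_{0}}$) repairs this; second, the Arens-type composition $\mu(g)=\mu'\bigl(t\mapsto\mu_{n}(l_{t}g)\bigr)$, which is well defined exactly because $X$ is introverted, preserves the previously established invariances \emph{exactly} (your computation $\mu(l_{s}f_{i})=\mu'(t\mapsto\mu_{n}(l_{st}f_{i}))=\mu_{n}(f_{i})=\mu(f_{i})$ is right, using $l_{t}l_{s}=l_{st}$ under the convention $l_{s}f(t)=f(st)$), so the induction plus weak-$*$ compactness of the set of means closes the argument. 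In short: your route is the same in spirit as the original source --- separation (equivalently, weak-$*$ density of finite means) for necessity, introversion-driven composition of means for sufficiency --- and its value here is that it makes the paper's black-box citation self-contained; be aware also that the paper's displayed definition of $r_{s}f$ contains a typo ($r_{s}f(t)=f(st)$ instead of $f(ts)$), and your proof correctly uses the intended convention $r_{s}f(t)=f(ts)$, without which the statement about right orbits would not parse.
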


Furthermore, $X$ (containing constants) is introverted if and only $%
\overline{co}^{p}\{r_{s}f:s\in S\}\subset X$ (see \cite[Lemma 2]{GrLa}).

Let $\mu $ be a (locally finite, positive) Radon measure on a topological
space $K,$ i.e., a Borel measure which is inner regular: $\mu (A)=\sup \{\mu
(C):C\subset A,C$ compact$\}$ for any measurable set $A$. Recall that the
support of $\mu $ is defined as the complement of the set of points that
have neighborhoods of measure 0. In general, the support may be empty but if
$K$ is (at least) locally compact and $\mu $ a finite Radon measure, then $%
\mu (\mathrm{supp}(\mu ))=\mu (K).$ The crucial observation for this paper
is the following theorem of Grothendieck \cite{Gr}.

\begin{theorem}
\label{Rad}Every finite Radon measure on a weakly compact set in a Banach
space has a norm-separable support.
\end{theorem}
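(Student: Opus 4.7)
The plan is to exhibit a norm-separable closed linear subspace $S_{0}\subset E$ containing $\mathrm{supp}(\mu)$. I may assume $\mu$ is a probability measure. The key device is the \emph{barycenter operator} $T\colon L^{1}(\mu)\to E$ defined by $T(f)=\int_{K} f(x)\, x\, d\mu(x)$, interpreted as a Pettis integral.

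First I verify that $T$ is a well-defined weakly compact operator into $E$. Since $K$ is weakly compact, hence norm-bounded, and each $\phi\in E^{\ast}$ is weakly continuous on $K$, the scalar integral $\int f\,\phi\, d\mu$ exists and defines $T(f)$ a priori as an element of $E^{\ast\ast}$. For a normalized indicator $\chi_{A}/\mu(A)$ with $\mu(A)>0$, $T(\chi_{A}/\mu(A))$ is the barycenter of the probability measure $\mu|_{A}/\mu(A)$, which by the standard existence of barycenters together with Krein's theorem (the closed convex hull of a weakly compact set is weakly compact) lies in $\overline{\conv}(K)\subset E$. By linearity on simple functions, the bound $\|T\|\le\sup_{x\in K}\|x\|$, and the norm-closedness of $E$ in $E^{\ast\ast}$, $T$ takes values in $E$. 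Moreover $T(B_{L^{1}})\subset\overline{\conv}(K\cup(-K))$, which is weakly compact by Krein, so $T$ is weakly compact.

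Next I invoke the Dunford--Pettis--Phillips representation theorem. Since $L^{1}(\mu)$ has the Dunford--Pettis property, the weakly compact operator $T$ is also Dunford--Pettis, hence representable by an essentially bounded, Bochner measurable function $g\colon K\to E$ with $T(f)=\int_{K}f\cdot g\, d\mu$ for every $f\in L^{1}(\mu)$. Bochner measurable functions are $\mu$-essentially separably valued, so there is a norm-closed separable subspace $S_{0}\subset E$ with $g\in S_{0}$ almost everywhere; since Bochner integrals of $S_{0}$-valued functions stay in $S_{0}$, we get $T(L^{1}(\mu))\subset S_{0}$.

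Finally, fix $x\in\mathrm{supp}(\mu)$. For every weak neighborhood $U$ of $x$ in $K$ one has $\mu(U)>0$ and $y_{U}:=T(\chi_{U})/\mu(U)\in S_{0}$. Given $\phi\in E^{\ast}$ and $\varepsilon>0$, taking $U\subset\{y\in K:|\phi(y-x)|<\varepsilon\}$ forces $|\phi(y_{U})-\phi(x)|\le\varepsilon$, so $y_{U}\to x$ weakly along the net of weak neighborhoods of $x$ ordered by reverse inclusion. Being norm-closed and convex, $S_{0}$ is weakly closed, so $x\in S_{0}$; thus $\mathrm{supp}(\mu)\subset S_{0}$ is norm-separable. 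The main obstacle is the invocation of the Dunford--Pettis--Phillips representation theorem, which itself rests on the Dunford--Pettis property of $L^{1}$; a more elementary Lusin-type construction of norm-compact subsets of $K$ of almost full $\mu$-measure is unavailable, as $(E,\|\cdot\|)$ is not second countable in general.
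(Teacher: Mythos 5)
Your proof is correct, but it follows a genuinely different route from the one sketched in the paper. The paper argues topologically: the support $S=\mathrm{supp}(\mu)$ of a Radon measure satisfies the countable chain condition and is weakly compact, so by Rosenthal's theorem every weakly compact subset of $C(S)$ is separable; hence $C(S)$ is separable by the Amir--Lindenstrauss theory of weakly compactly generated spaces, and therefore $S$ itself is separable. You instead argue through vector-measure theory: the barycenter operator $T\colon L^{1}(\mu)\to E$, $T(f)=\int_{K}f(x)\,x\,d\mu(x)$, is $E$-valued and weakly compact (existence of barycenters on the weakly compact convex set $\overline{\conv}(K)$ plus Krein's theorem), hence representable by the Dunford--Pettis--Phillips theorem; the representing kernel is Bochner measurable, hence essentially separably valued, so the range of $T$ lies in a norm-closed separable subspace $S_{0}$, and the net of local averages $T(\chi_{U})/\mu(U)$ pulls every support point into $S_{0}$, which is weakly closed by convexity. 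Your argument buys an explicit separable subspace containing the support and sits naturally in the Radon--Nikod\'{y}m-property circle of ideas (relevant to the generalization announced at the end of the paper), at the price of invoking the machinery of representable operators; the paper's route is shorter given the cited structural results and illustrates chain-condition methods. One small correction: your parenthetical justification that $T$ is ``Dunford--Pettis, hence representable'' is not a valid implication in general---there exist completely continuous operators on $L^{1}(\mu)$ that are not representable---but this does not affect the proof, since the Dunford--Pettis--Phillips theorem you invoke asserts directly that weakly compact operators on $L^{1}(\mu)$ are representable.
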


\begin{proof}
We sketch the proof following Todor\v{c}evi\'{c} \cite[Theorem 9.1]{To},
see, e.g., \cite[Theorem 4.3]{Li} for two detailed proofs. Since $S=\mathrm{%
supp}(\mu )$ satisfies the countable chain condition and is weakly compact,
it follows from \cite[Theorem 1.4]{Ro} that every weakly compact subset of $%
C(S)$ is separable. Hence $C(S)$ is separable as a weakly compactly
generated space by \cite[Prop. 1]{AmLi}. It follows that $S$ is also
separable.
\end{proof}

\section{Fixed point theorems}

Let $S$ be a semitopological semigroup. An action $\cdot :S\times
K\rightarrow K$ on a Hausdorff space $K$ is called equicontinuous if the
family of functions $\{K\ni x\rightarrow s\cdot x\in K\}_{s\in S}$ is
equicontinuous in the usual sense. It is called quasi-equicontinuous if $%
\bar{S}^{p},$ the closure of $\{K\ni x\rightarrow s\cdot x\in K\}_{s\in S}$
in $K^{K}$ with the product topology, consists of only continuous mappings.
If $K$ is a subset of a Banach space $E$, then the action on $K$ is called
weakly equicontinuous (resp., weakly quasi-equicontinuous) if it is
equicontinuous (resp., quasi-equicontinuous) when $K$ is equipped with the
weak topology of $E.$

The aim of this section is to prove the following fixed point properties.

\begin{theorem}
\label{WAP}$WAP(S)$ has a LIM if and only if $S$ has the following fixed
point property:

\begin{enumerate}
\item[$(F)$:] Whenever $S$ acts on a weakly compact convex subset $K$ of a
Banach space and the action is weakly separately continuous (i.e.,
separately continuous when $K$ is equipped with the weak topology), weakly
quasi-equicontinuous and nonexpansive, then $K$ contains a common fixed
point for $S.$
\end{enumerate}
\end{theorem}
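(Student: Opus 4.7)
The theorem is an equivalence, so there are two directions to prove, and the substantive one is that a LIM on $WAP(S)$ forces property $(F)$; the converse follows the pattern of the corresponding sufficiency arguments in \cite{LaZh2}.

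For the easier direction ($(F) \Rightarrow$ LIM on $WAP(S)$), I would fix $f \in WAP(S)$ and let $K = \overline{\mathrm{co}}^{w}\{r_s f : s \in S\} \subseteq C(S)$, which is weakly compact convex by Krein's theorem and the defining weak relative compactness of the right orbit. The right-translation action $T_s g = r_s g$ is a linear isometry of $K$, hence nonexpansive, and one checks that it is weakly separately continuous and weakly quasi-equicontinuous (the latter via the fact that the closure of $\{r_s|_K\}$ in $(K,w)^{(K,w)}$ is a compact semitopological semigroup of weakly continuous maps, which is the standard enveloping-semigroup consequence of weak almost periodicity). Property $(F)$ then yields a common fixed point, from which one extracts a constant function in $\overline{\mathrm{co}}^{p}\{r_s f\}$ and invokes Theorem \ref{Intr} to conclude.

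For the main direction, assume $WAP(S)$ has a LIM and $S$ acts on a weakly compact convex $K \subseteq E$ under the hypotheses of $(F)$. The plan is to produce a norm-separable, closed, convex, $S$-invariant subset $K_0 \subseteq K$ and then apply the already-established separable case of the theorem from \cite{LaZh2} to the restriction of the action to $K_0$. Grothendieck's Theorem \ref{Rad} is the bridge: I would extend the action to the weak-$\ast$ compact convex set $P(K_w)$ of Radon probability measures on the compact Hausdorff space $(K,w)$ via the pushforward
\begin{equation*}
s \cdot \mu = \mu \circ T_s^{-1},
\end{equation*}
obtaining a weak-$\ast$ separately continuous, quasi-equicontinuous, affine action. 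Combined with the LIM on $WAP(S)$, a Mitchell/Lau-type fixed-point theorem for $WAP$-amenable semigroups acting affinely on compact convex sets delivers an $S$-invariant $\mu^{\ast} \in P(K_w)$.

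Set $\Sigma = \mathrm{supp}(\mu^{\ast})$. Since each $T_s$ is weakly continuous (weak separate continuity in the first variable), invariance of $\mu^{\ast}$ forces $T_s(\Sigma) \subseteq \Sigma$: any weak neighborhood $V$ of $T_s x$ satisfies $\mu^{\ast}(V) = \mu^{\ast}(T_s^{-1}V) > 0$, because $T_s^{-1}V$ is a weak neighborhood of $x \in \Sigma$. Theorem \ref{Rad} gives that $\Sigma$ is norm separable; thus $K_0 := \overline{\mathrm{co}}^{\|\cdot\|}(\Sigma) \subseteq K$ is norm separable, convex, $S$-invariant, and weakly compact (Krein--Smulian). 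The restricted action on $K_0$ still satisfies all the hypotheses of $(F)$, and now $K_0$ is separable, so the separable version of the theorem from \cite{LaZh2} supplies the desired common fixed point in $K_0 \subseteq K$. The main obstacle is the clean verification that the pushforward action on $P(K_w)$ really inherits the precise quasi-equicontinuity and separate continuity required to invoke the affine Mitchell/Lau theorem---the interaction between the product topology on $K^K$ and the weak-$\ast$ topology on $P(K_w)$ requires careful bookkeeping. A secondary subtlety is dealing with the possible lack of an identity in $S$ when passing from the fixed point to a constant function in the sufficiency direction.
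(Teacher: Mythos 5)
Your sufficiency direction ($(F)\Rightarrow WAP(S)$ has a LIM) is essentially the paper's own argument: reduce to a monoid, act by right translations on $K_{f}=\overline{co}\{r_{s}f:s\in S\}$, verify weak separate continuity and weak quasi-equicontinuity, extract a constant function from the fixed point, and invoke Theorem \ref{Intr}. (One slip: $r_{s}$ is a linear \emph{contraction}, not an isometry, but nonexpansiveness is all that is needed.) The necessity direction, however, contains a genuine gap at its crucial step. You correctly obtain an invariant probability measure $\mu^{\ast}$ on $(K,w)$, correctly show that its support $\Sigma$ satisfies $T_{s}(\Sigma)\subseteq \Sigma$, and correctly conclude from Theorem \ref{Rad} that $\Sigma$ is norm-separable. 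But your claim that $K_{0}=\overline{co}^{\Vert \cdot \Vert }(\Sigma )$ is $S$-invariant is false in general: the maps $T_{s}$ are nonexpansive but \emph{not affine}, so invariance of $\Sigma$ gives no control whatsoever over $T_{s}\bigl(\sum_{i}\lambda _{i}x_{i}\bigr)$ for $x_{i}\in \Sigma$; the image of the convex hull of an invariant set need not lie in that convex hull. Consequently you never produce a separable, weakly compact, \emph{convex}, invariant set to which the separable theorem \cite[Theorem 3.4]{LaZh1} could be applied, and there is no easy repair: iterating (take convex hull, then saturate under $S$) destroys separability, since $\bigcup_{s\in S}T_{s}(\overline{co}\,\Sigma )$ need not be separable when $S$ is not.

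This is precisely the obstruction the paper's proof is built to avoid, and it resolves it without ever producing a separable invariant convex set. It takes a minimal invariant weakly compact convex $C\subseteq K$ and then a minimal invariant weakly compact (not necessarily convex) $F\subseteq C$. Lemma \ref{Lem} (the analogue of your measure-support argument, applied inside $F$) produces a separable invariant subset, which by minimality equals $F$; minimality also makes every orbit weakly dense in $F$, and then $F$ is \emph{norm-compact} by the fragmentability result \cite[Lemma 5.2]{LaTa}. Finally, if $\diam F>0$, DeMarr's lemma \cite[Lemma 1]{De} yields $u\in \overline{co}\,F\subseteq C$ with $r_{0}=\sup_{y\in F}\Vert u-y\Vert <\diam F$, and the set $C_{0}=\{x\in C:\Vert x-y\Vert \leq r_{0}\text{ for all }y\in F\}$ is a proper invariant convex subset of $C$ (here only nonexpansiveness and $sF=F$ are used, no affinity), contradicting minimality of $C$; hence $F$ is a single fixed point. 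Note also that your remaining ``main obstacle''---justifying an affine Mitchell/Lau-type theorem on $P(K_{w})$---is an avoidable detour: the paper's Lemma \ref{Lem} builds the invariant measure directly from the LIM via $\Phi (f)=m(f_{y})$ for a single fixed $y$, using only that $f_{y}\in WAP(S)$, which is \cite[Lemma 3.2]{LaZh1}; no fixed point theorem on the space of measures is needed.
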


In the case of Banach spaces, the above theorem drops the separability
assumption from \cite[Theorem 3.4]{LaZh1} and answers in affirmative \cite[%
Question 4]{LaZh2} in that case.

\begin{theorem}
\label{AP}$AP(S)$ has a LIM if and only if $S$ has the following fixed point
property:

\begin{enumerate}
\item[$(E)$:] Whenever $S$ acts on a weakly compact convex subset $K$ of a
Banach space and the action is weakly separately continuous, weakly
equicontinuous and nonexpansive, then $K$ contains a common fixed point for $%
S.$
\end{enumerate}
\end{theorem}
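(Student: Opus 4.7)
My plan is to prove both directions following the template expected for Theorem \ref{WAP}, exploiting the stronger weak equicontinuity hypothesis to land in $AP(S)$ rather than only in $WAP(S)$.

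For the forward implication, let $m$ be a LIM on $AP(S)$ and suppose $S$ acts on a weakly compact convex $K$ satisfying the hypotheses of $(E)$. Fix $x_{0}\in K$ and, for each weakly continuous $g:K\to\mathbb{R}$, set $\tilde g(s)=g(T_{s}x_{0})$. The first task is to verify $\tilde g\in AP(S)$. Since $l_{s}\tilde g=(g\circ T_{s})\circ\phi$ with $\phi(t)=T_{t}x_{0}$, relative norm compactness of $\{l_{s}\tilde g:s\in S\}$ in $\ell^{\infty}(S)$ reduces to relative uniform compactness of $\{g\circ T_{s}:s\in S\}$ on the weakly compact set $K$; weak equicontinuity of $\{T_{s}\}$ makes this family equicontinuous on $(K,\text{weak})$, and Arzel\`a--Ascoli concludes. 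Then $\mu(g):=m(\tilde g)$ defines a normalized positive linear functional on the space of weakly continuous functions on $K$, and by Riesz representation it corresponds to a Radon probability measure $\mu$ on $K$ equipped with its weak topology. The identity $\widetilde{g\circ T_{s}}=l_{s}\tilde g$ together with left invariance of $m$ then yields $\mu(g\circ T_{s})=\mu(g)$, so $\mu$ is $S$-invariant.

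By Theorem \ref{Rad} the support $K_{0}:=\mathrm{supp}(\mu)$ is norm separable. A standard regularity argument using weak continuity of each $T_{s}$ shows $T_{s}(K_{0})\subset K_{0}$: if $T_{s}y\notin K_{0}$ for some $y\in K_{0}$, a weakly open $V\ni T_{s}y$ with $\mu(V)=0$ yields $\mu(T_{s}^{-1}V)=\mu(V)=0$, while $T_{s}^{-1}V$ is a weak neighborhood of $y\in\mathrm{supp}(\mu)$, a contradiction. From $K_{0}$ I form $\overline{\conv}^{\|\cdot\|}K_{0}=\overline{\conv}^{w}K_{0}$ (Mazur), which is norm-separable, weakly compact, and convex. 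Combining this with a minimal weakly compact convex $S$-invariant subset $M\subset K$ produced by Zorn's lemma, one reduces the fixed point problem to a norm-separable, weakly compact, convex, $S$-invariant subset of $K$, where the separable fixed point theorem from \cite{LaZh1} applies.

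For the converse, by Theorem \ref{Intr} it suffices to produce a constant function in $\overline{co}^{p}\{r_{s}f:s\in S\}$ for each $f\in AP(S)$. Almost periodicity of $f$ renders this orbit norm compact in $C(S)$, so $K_{f}:=\overline{\conv}\{r_{s}f\}$ is norm-compact (hence weakly compact) convex in $C(S)$. The natural translation action of $S$ on $K_{f}$ is linear, preserves $K_{f}$, is nonexpansive in the sup norm, and weakly separately continuous; on the norm-compact $K_{f}$ the weak topology agrees with the norm topology, so weak equicontinuity reduces to norm equicontinuity, which is immediate from nonexpansivity. Applying $(E)$ yields a common fixed point $g^{\ast}\in K_{f}$, and a standard reduction (adjoining an identity to $S$ if necessary) forces $g^{\ast}$ to be a constant function, whence Theorem \ref{Intr} furnishes a LIM on $AP(S)$. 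The principal obstacle is the separability reduction in the forward direction: since the $T_{s}$ are only nonexpansive (not affine), $\overline{\conv}^{w}K_{0}$ need not be $S$-invariant, and some care is required to produce a norm-separable, weakly compact, convex, $S$-invariant subset of $K$ to which the known separable fixed point theorem can be applied.
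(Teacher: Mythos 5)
Your converse direction is correct and essentially the paper's own argument (norm-compact $K_f$, coincidence of the weak and norm topologies there, property $(E)$, constant fixed function, Theorem \ref{Intr}). The forward direction, however, has a genuine gap, and it is exactly the one you flag in your closing sentence. The mean-to-measure construction, the $S$-invariance of $\mu$, and the inclusion $T_s(K_0)\subset K_0$ for $K_0=\mathrm{supp}(\mu)$ are all fine (this is the content of the paper's Lemma \ref{Lem}), but the step ``one reduces the fixed point problem to a norm-separable, weakly compact, convex, $S$-invariant subset'' is never carried out and does not follow from what precedes it: since the $T_s$ are nonexpansive but not affine, $\overline{\conv}^{w}K_0$ need not be invariant, and you cannot repair this by iterating ``take the orbit, then the closed convex hull,'' because $S$ may be nonseparable, so $S\cdot\overline{\conv}K_0$ --- a union of possibly uncountably many separable sets --- need not be separable. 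Thus the route through the separable fixed point theorem of \cite{LaZh1} never closes, and the forward implication remains unproved as written.

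The paper circumvents precisely this obstacle by never convexifying $K_0$ and never invoking the separable theorem. First, its Lemma \ref{Lem} proves the \emph{equality} $sK_0=K_0$ (from $\mu(sK_0)=\mu(s^{-1}(sK_0))=\mu(K_0)=1$ together with weak compactness of $sK_0$), not merely the inclusion you establish. Then, in Theorem \ref{Main}, it takes a minimal weakly compact convex invariant set $C$ and, inside it, a minimal weakly compact, \emph{not necessarily convex}, invariant set $F\subset C$; applying the lemma inside $F$, minimality forces $F=K_0$, so $F$ is norm-separable and every orbit is weakly dense in $F$, whence $F$ is norm-compact by the fragmentability lemma \cite[Lemma 5.2]{LaTa}. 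Finally DeMarr's argument finishes: if $\diam F>0$, one finds $u\in\overline{co}\,F\subset C$ with $r_{0}=\sup_{y\in F}\Vert u-y\Vert<\diam F$, and the set $C_{0}=\{x\in C:\Vert x-y\Vert\leq r_{0}\text{ for all }y\in F\}$ is a proper, weakly compact, convex, invariant subset of $C$ --- the invariance of $C_{0}$ uses exactly nonexpansiveness combined with the equality $sF=F$ --- contradicting the minimality of $C$. To complete your proof you would need to adopt this minimal-set/DeMarr scheme (for which your inclusion must be upgraded to the equality $sK_0=K_0$) or find some other device; the convex-hull reduction you propose cannot work as stated.
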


It removes the separability assumption from \cite[Theorem 3.6]{LaZh1} in the
case of Banach spaces and thus strengthen Theorem \ref{APcomp} which was
alluded to in the introduction.

In addition to the above characterization theorems, we obtain the following
fixed point theorems.

\begin{theorem}
\label{WAPLUC}If $WAP(S)\cap LUC(S)$ has a LIM, then $S$ has the following
fixed point property:

\begin{enumerate}
\item[$(F^{\ast })$:] Whenever $S$ acts on a weakly compact convex subset $K$
of a Banach space and the action is (jointly) weakly continuous, weakly
quasi-equicontinuous and nonexpansive, then $K$ contains a common fixed
point for $S.$
\end{enumerate}
\end{theorem}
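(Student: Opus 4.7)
The plan is to mirror the strategy presumably used for Theorems \ref{WAP} and \ref{AP}: construct an $S$-invariant Radon probability measure $\mu$ on the weakly compact set $K$ from a LIM on $WAP(S)\cap LUC(S)$, exploit Grothendieck's theorem (Theorem \ref{Rad}) to pass to the norm-separable weakly compact convex $S$-invariant subset $K_{0}=\overline{\conv}\,\mathrm{supp}(\mu)$, and finally apply the already-known separable version of the fixed point theorem from \cite{LaZh2}. To produce $\mu$, fix $x_{0}\in K$ and, for each $f\in C(K_{w})$, define the orbit function $\phi_{f}(s)=f(T_{s}x_{0})$. The heart of the argument is to show that $\phi_{f}\in WAP(S)\cap LUC(S)$ for every $f\in C(K_{w})$. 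For weak almost periodicity I would verify Grothendieck's iterated-limits criterion on the values $\phi_{f}(s_{n}t_{m})=f(T_{s_{n}}T_{t_{m}}x_{0})$: extract a weak cluster point $y$ of $(T_{t_{m}}x_{0})$ in $K$ and, using weak quasi-equicontinuity, a weakly continuous cluster map $\sigma\in\overline{S}^{p}$ of $(T_{s_{n}})$, and check that both iterated limits coincide with $f(\sigma(y))$. For membership in $LUC(S)$ I would combine the joint weak continuity of the action with the uniform weak continuity of $f$ on the weakly compact orbit closure $\overline{\{T_{t}x_{0}:t\in S\}}^{w}$, using weak quasi-equicontinuity to upgrade pointwise continuity of $s\mapsto f(T_{s}T_{t}x_{0})$ to uniformity in $t$, so that $\|l_{s}\phi_{f}-l_{s_{0}}\phi_{f}\|_{\infty}\to 0$ as $s\to s_{0}$.

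Given a LIM $m$ on $WAP(S)\cap LUC(S)$, set $\mu(f)=m(\phi_{f})$ for $f\in C(K_{w})$; this is a positive linear functional with $\mu(1)=1$, hence by the Riesz representation theorem on the compact Hausdorff space $(K,w)$ it corresponds to a Radon probability measure on $K$. The identity $\phi_{f\circ T_{s}}=l_{s}\phi_{f}$ together with left invariance of $m$ gives $\mu(f\circ T_{s})=\mu(f)$, i.e.\ $(T_{s})_{\ast}\mu=\mu$ for each $s\in S$. By Theorem \ref{Rad}, $\mathrm{supp}(\mu)$ is norm-separable, and pushforward invariance combined with weak continuity of each $T_{s}$ forces $T_{s}(\mathrm{supp}(\mu))\subset\mathrm{supp}(\mu)$. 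By Krein--\v{S}mulian, $K_{0}=\overline{\conv}\,\mathrm{supp}(\mu)$ is then a norm-separable, weakly compact, convex, $S$-invariant subset of $K$ on which the restricted action inherits all the hypotheses of $(F^{\ast})$. Invoking the separable version of the relevant fixed point theorem from \cite{LaZh2} on $K_{0}$ produces a common fixed point for $S$, completing the proof.

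The main obstacle I anticipate is precisely the verification that $\phi_{f}\in WAP(S)\cap LUC(S)$, and in particular the $LUC(S)$ half. Joint weak continuity of the action gives continuity of $s\mapsto\phi_{f}(st)$ for each fixed $t$, but one needs uniformity in $t$ to conclude sup-norm continuity of $s\mapsto l_{s}\phi_{f}$; extracting this uniformity from weak quasi-equicontinuity together with the weak compactness of the orbit closure is the step for which the stronger joint (rather than merely separate) continuity assumption in $(F^{\ast})$ appears indispensable, and it is this strengthening that separates Theorem \ref{WAPLUC} from the $WAP(S)$-only analog in Theorem \ref{WAP}.
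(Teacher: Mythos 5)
Your first half matches the paper's own route: you show the orbit functions $\phi_f$ lie in $WAP(S)\cap LUC(S)$ (the paper cites \cite[Lemma 3.2]{LaZh1} for the $WAP$ part, and your double-limit verification is essentially the proof of that lemma; for the $LUC$ part the paper cites \cite[Lemma 5.1]{LaTa}, where joint weak continuity plus weak compactness of $K$ alone suffice --- quasi-equicontinuity plays no role in that half, contrary to what your last paragraph anticipates), and you then build the invariant Radon probability measure from the LIM via Riesz representation and invoke Theorem \ref{Rad} to get a norm-separable invariant support. This is precisely the content of the paper's Lemma \ref{Lem}, so up to this point you are on track; the step you flagged as the ``main obstacle'' is in fact the easy, already-known part.

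The genuine gap is in your second half. You pass from the invariant set $\mathrm{supp}(\mu)$ to $K_{0}=\overline{\conv}\,\mathrm{supp}(\mu)$ and assert that $K_{0}$ is $S$-invariant. The maps $T_{s}$ are nonexpansive but \emph{not affine}, so $T_{s}(\mathrm{supp}(\mu))\subset\mathrm{supp}(\mu)$ gives no control whatsoever over $T_{s}$ applied to convex combinations of points of the support: the word ``then'' before ``$S$-invariant'' hides a step that simply fails. Nor can one repair it by iteratively convexifying and saturating under $S$: for a nonseparable $S$, the orbit $\{s\cdot x:s\in S\}$ of a point $x\notin\mathrm{supp}(\mu)$ need not be norm-separable, so separability is destroyed at the first saturation. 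This is exactly the obstruction that forces the paper's different endgame in Theorem \ref{Main}: take a \emph{minimal} invariant weakly compact convex set $C$ and inside it a \emph{minimal} invariant weakly compact (non-convex) set $F$; apply the measure argument to $F$, where minimality forces $F=\mathrm{supp}(\mu)$, so the separable set is $F$ itself and no convex hull is ever taken; conclude $F$ is norm-compact by \cite[Lemma 5.2]{LaTa} (fragmentability), and finish with DeMarr's argument: if $\diam F>0$, the set $C_{0}=\{x\in C:\Vert x-y\Vert\leq r_{0}\text{ for all }y\in F\}$ is convex, weakly compact, proper, and $S$-invariant --- invariant because it is cut out by metric constraints relative to $F$ with $sF=F$, and such constraints are preserved by nonexpansive maps --- contradicting minimality of $C$. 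That metrically-defined invariant convex set is the device replacing your non-invariant convex hull, and without it (or the minimality framework) your reduction to the separable Lau--Zhang theorem (which, incidentally, is \cite[Theorem 5.1]{LaZh1}, not \cite{LaZh2}) does not go through.
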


It drops the separability assumption from the part of \cite[Theorem 5.1]%
{LaZh1}. It would be interesting to prove also the reverse implication as it
was done there in the case of locally convex spaces.

\begin{theorem}
\label{LUC}If $LUC(S)$ has a LIM, then $S$ has the following fixed point
property:

\begin{enumerate}
\item[$(G^{\ast })$:] Whenever $S$ acts on a weakly compact convex subset $K$
of a Banach space and the action is (jointly) weakly continuous and
nonexpansive, then $K$ contains a common fixed point for $S.$
\end{enumerate}
\end{theorem}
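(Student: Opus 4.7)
The plan is to produce an $S$-invariant Radon probability measure on $K$ via Mitchell's fixed-point theorem for amenable semigroups applied to the push-forward action on probability measures, then use Grothendieck's theorem (Theorem~\ref{Rad}) to pass to a norm-separable $S$-invariant subset, and finally invoke the separable version of the theorem from \cite{LaZh1}, in the spirit of the preceding theorems of this section.

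First I would verify that for every $x\in K$ and $\phi \in E^{\ast }$ the function $f_{x,\phi }(s)=\phi (s\cdot x)$ belongs to $LUC(S)$. This uses joint weak continuity of the action, weak compactness of $K$, and the classical fact that for a jointly continuous $F:S\times L\to \mathbb{R}$ with $L$ compact the map $s\mapsto F(s,\cdot )$ is norm-continuous from $S$ to $C(L)$; applied to $F(s,y)=\phi (s\cdot y)$ it gives norm-continuity of $s\mapsto l_{s}f_{x,\phi }$. The same observation, applied to an arbitrary $f\in C(K,w)$, shows that the push-forward action $(s,\mu )\mapsto s_{\ast }\mu $ of $S$ on the set $M(K)$ of Radon probability measures on $(K,w)$, sitting inside $C(K,w)^{\ast }$ with the weak$^{\ast }$ topology, is affine in $\mu$ and jointly continuous. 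Since $M(K)$ is compact convex in a Hausdorff locally convex space and $S$ is left amenable (as $LUC(S)$ has a LIM), Mitchell's extension of Day's theorem yields an invariant $\mu _{0}\in M(K)$, i.e.\ $s_{\ast }\mu _{0}=\mu _{0}$ for every $s\in S$.

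By Theorem~\ref{Rad}, the support $C_{0}=\mathrm{supp}(\mu _{0})$ is norm-separable in $K$, and it is $S$-invariant because $s\cdot C_{0}\subseteq \mathrm{supp}(s_{\ast }\mu _{0})=C_{0}$. I would then pick by Zorn's lemma a minimal nonempty $S$-invariant weakly compact convex subset $K_{0}\subseteq K$, repeat the construction inside $K_{0}$ to obtain a norm-separable $S$-invariant weakly closed $C_{0}\subseteq K_{0}$, and deduce that $K_{0}$ itself is norm-separable. Once this is done, the weak topology on $K_{0}$ is metrizable (as on any norm-separable weakly compact subset of a Banach space), and the separable fixed-point theorem of Lau--Zhang \cite{LaZh1} applied to the restricted action delivers a common fixed point of $S$ in $K_{0}$, hence in $K$.

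The main obstacle is precisely the deduction of norm-separability of $K_{0}$ from that of $C_{0}$. Because a nonexpansive map does not preserve convex combinations, the norm-separable weakly closed convex hull $\overline{\mathrm{co}}^{w}(C_{0})$ is not automatically $S$-invariant, so one cannot just identify it with $K_{0}$ via minimality. The required descent has to go through a minimal $S$-invariant weakly closed (but not necessarily convex) subset $M\subseteq C_{0}$ obtained by a further application of Zorn's lemma, and then exploit the weak metrizability of $M$ together with joint (not merely separate) weak continuity, nonexpansivity, and the LIM on $LUC(S)$ to produce asymptotically fixed sequences in $M$ and, by a Hsu--Lau--Zhang type sequential argument, an actual common fixed point. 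This is the delicate step of the proof.
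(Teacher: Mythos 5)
Your first two steps are sound and essentially parallel the paper's: joint weak continuity gives $f_{y}\in LUC(S)$ for $f\in C(K)$ (the paper quotes \cite[Lemma 5.1]{LaTa} for exactly this), and one gets an $S$-invariant Radon probability measure on $K$ whose support is norm-separable by Theorem \ref{Rad} and invariant. (The paper's Lemma \ref{Lem} builds the measure directly from the LIM via $\Phi (f)=m(f_{y})$ rather than through Mitchell's theorem on the space of probability measures, but your route is legitimate, and your uniform-continuity argument for joint continuity of the push-forward action is the same fact that puts $f_{y}$ in $LUC(S)$.) The genuine gap is everything after that, and you have flagged it yourself: you cannot pass from separability of the support to separability of a minimal invariant weakly compact \emph{convex} set, since nonexpansive maps are not affine and closed convex hulls of invariant sets need not be invariant. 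Your proposed repair --- descend to a minimal invariant weakly closed $M\subseteq C_{0}$, use weak metrizability, and finish by ``asymptotically fixed sequences'' and ``a Hsu--Lau--Zhang type sequential argument'' --- is not a proof: no such argument is supplied, and the theorem you want to quote at the end does not exist in the needed form. The separable/metrizable results for property $(G^{\ast })$ (Hsu's theorem and \cite[Theorem 5.4]{LaZh1}) assume $S$ is left \emph{reversible} (discrete, resp.\ metrizable as a semigroup); covering left \emph{amenable} semitopological semigroups is precisely the content of Theorem \ref{LUC}, so an appeal of this kind is unavailable (and the other separable theorems of \cite{LaZh1} require weak quasi-equicontinuity, which $(G^{\ast })$ does not assume).

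What actually closes the gap (Theorem \ref{Main} of the paper) goes as follows. Take a minimal invariant weakly compact convex $C\subseteq K$ and, inside it, a minimal invariant weakly compact (not necessarily convex) set $F$; run the measure construction \emph{on} $F$, so that minimality forces $F=\mathrm{supp}(\mu )$. This yields three things at once: $F$ is norm-separable; $sF=F$ for every $s\in S$ (equality, obtained from the support argument --- your proposal only records the inclusion $s\cdot C_{0}\subseteq C_{0}$); and every orbit is weakly dense in $F$. Separability plus dense orbits give that $F$ is norm-\emph{compact}, by the fragmentability lemma \cite[Lemma 5.2]{LaTa} --- this is where the Hsu/Namioka ideas really enter, not through any sequential asymptotic-fixed-point device. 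Finally, DeMarr's lemma finishes: if $\diam F>0$, pick $u\in \overline{co}\,F\subseteq C$ with $r_{0}=\sup_{y\in F}\left\Vert u-y\right\Vert <\diam F$ and set $C_{0}=\{x\in C:\left\Vert x-y\right\Vert \leq r_{0}\text{ for all }y\in F\}$; nonexpansiveness together with the equality $sF=F$ makes $C_{0}$ invariant, and it is a proper weakly compact convex subset of $C$, contradicting minimality of $C$. Hence $F$ is a singleton, i.e., a common fixed point. None of these ingredients --- the identification $F=\mathrm{supp}(\mu )$ with $sF=F$ and dense orbits, norm-compactness via fragmentability, and the DeMarr contradiction against the minimal convex set --- appears in workable form in your proposal, so it stops short precisely at the step you called delicate.
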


Note that Hsu \cite{Hsu} proved that in the more general case of locally
convex spaces, left reversible discrete semigroups have property $(G^{\ast
}) $ and Lau and Zhang \cite[Theorem 5.4]{LaZh1} generalized Hsu's result to
left reversible, metrizable semitopological semigroup. Theorem \ref{LUC}
extends it also to left amenable semigroups (in Banach spaces).

The following diagram summarizes the known relations among the fixed point
properties of semitopological semigroups acting on weakly compact convex
subsets of Banach spaces (compare the diagram on p. 2553 in \cite{LaZh1}):

\bigskip \bigskip

{\small $%
\begin{array}{ccccccc}
&  &
\begin{array}{c}
LUC(S) \\
\text{has LIM}%
\end{array}%
\medskip &  &  &  &  \\
&  & \Downarrow \smallskip &  &  &  &  \\
\begin{array}{c}
S\text{ is left reversible} \\
\&\text{ metrizable}%
\end{array}
& \Rightarrow & (G^{\ast }) & \Rightarrow & (F^{\ast }) & \Leftarrow &
WAP(S)\cap LUC(S)\text{ has LIM} \\
&  & \Uparrow \medskip &  & \Uparrow \medskip &  &  \\
&  & (G)\medskip & \Rightarrow & (F)\medskip & \Rightarrow &
(E)\Leftrightarrow (D)\Leftrightarrow AP(S)\text{ has LIM} \\
&  &  &  & \Updownarrow \bigskip &  &  \\
&  &  &  &
\begin{array}{c}
WAP(S) \\
\text{has LIM.}%
\end{array}
&  &
\end{array}%
$}\bigskip \bigskip

We begin with a general lemma which is patterned after \cite[Theorem 4.1]{La}
and \cite[Lemma 5.1]{LaTa}. We will denote by $C(K)$ the space of weakly
continuous complex-valued functions defined on a weakly compact set $K.$

\begin{lemma}
\label{Lem}Let $S$ be a semitopological semigroup and $X$ a closed linear
subspace of $\ell ^{\infty }(S)$ containing constants and invariant under
translations. Suppose that $S$ acts on a weakly compact subset $K$ of a
Banach space $E$ so that the action is weakly separately continuous and
there exists $y\in K$ such that for every $f\in C(K),$ the function $S\ni
s\rightarrow f_{y}(s)=f(s\cdot y)$ belongs to $X.$ If $X$ has a left
invariant mean, then there exists a nonempty weakly compact and
norm-separable subset $K_{0}$ of $K$ such that $sK_{0}=\{s\cdot x:x\in
K_{0}\}=K_{0}$ for every $s\in S$.
\end{lemma}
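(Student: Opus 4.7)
The plan is to use the LIM on $X$ together with the hypothesis on $y$ to construct an $S$-invariant Radon probability measure $\nu$ on $K$ via the Riesz representation theorem, and then set $K_0=\mathrm{supp}(\nu)$. Grothendieck's Theorem \ref{Rad} will then deliver norm-separability for free, weak compactness of $K$ will give weak compactness of $K_0$, and the $S$-invariance of $\nu$ will be used to check that $T_sK_0=K_0$ for every $s\in S$, where $T_s\colon K\to K$ is the weakly continuous map $x\mapsto s\cdot x$ (weakly continuous by weak separate continuity of the action).

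To build $\nu$, fix a LIM $\mu$ on $X$ and define $\Phi\colon C(K)\to\mathbb{C}$ by $\Phi(f)=\mu(f_y)$; this is well defined since $f_y\in X$ by hypothesis. The functional $\Phi$ is linear with $\Phi(1)=1$ and $|\Phi(f)|\le\|f\|_\infty$, and it is positive on real-valued $f\ge 0$ (a standard consequence of $\|\mu\|=\mu(1)=1$). Since $K$ with the weak topology is a compact Hausdorff space, the Riesz representation theorem furnishes a unique Radon probability measure $\nu$ on $K$ with $\Phi(f)=\int_K f\,d\nu$ for every $f\in C(K)$. Setting $K_0:=\mathrm{supp}(\nu)$, Theorem \ref{Rad} gives norm-separability of $K_0$; it is nonempty (probability measure on a compact Hausdorff space) and weakly closed in $K$, hence weakly compact.

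It remains to verify $T_sK_0=K_0$ for each $s\in S$. Since $T_s$ is weakly continuous, $f\circ T_s\in C(K)$ whenever $f\in C(K)$, and a direct computation gives
$$(f\circ T_s)_y(t)=f((st)\cdot y)=f_y(st)=(l_sf_y)(t).$$
Left invariance of $\mu$ therefore yields $\int_K f\circ T_s\,d\nu=\mu(l_sf_y)=\mu(f_y)=\int_K f\,d\nu$ for all $f\in C(K)$, so the pushforward $(T_s)_\ast\nu$ equals $\nu$. The inclusion $T_s(K_0)\subseteq K_0$ follows easily: for $x\in K_0$ and any weakly open neighborhood $U$ of $T_sx$, the preimage $T_s^{-1}(U)$ is a weakly open neighborhood of $x$, so $\nu(U)=\nu(T_s^{-1}(U))>0$. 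The reverse inclusion, which I expect to be the main obstacle, exploits weak compactness of $T_s(K_0)$: if some $x\in K_0\setminus T_s(K_0)$ existed, one could separate $x$ from the weakly compact set $T_s(K_0)$ by a weakly open $U$ with $U\cap T_s(K_0)=\emptyset$; then $T_s^{-1}(U)\cap K_0=\emptyset$, so $0<\nu(U)=\nu(T_s^{-1}(U))\le\nu(K\setminus K_0)=0$, a contradiction.
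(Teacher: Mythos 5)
Your proposal is correct and follows essentially the same route as the paper's proof: define $\Phi(f)=m(f_y)$ on $C(K)$, represent it by a Radon probability measure via the Riesz theorem, set $K_{0}=\mathrm{supp}(\nu)$, get norm-separability from Theorem \ref{Rad}, and use the invariance $(T_{s})_{\ast}\nu=\nu$ coming from left invariance of the mean to prove $sK_{0}=K_{0}$. The only (cosmetic) difference is in verifying the set equality: the paper notes directly that $s^{-1}K_{0}$ and $sK_{0}$ are weakly closed sets of full measure and hence contain the support, whereas you use an equivalent neighborhood argument for $sK_{0}\subseteq K_{0}$ and a Hausdorff separation argument for the reverse inclusion.
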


\begin{proof}
Let $m$ be a left invariant mean on $X$ and define a positive functional $%
\Phi $ on $C(K)$ by%
\begin{equation*}
\Phi (f)=m(f_{y})
\end{equation*}%
for every $f\in C(K).$ Notice that $\Phi $ is well-defined since, by
assumption, $f_{y}\in X,$ and $\left\Vert \Phi \right\Vert =1.$ Define%
\begin{equation*}
_{t}f(x)=f(t\cdot x),\ x\in K,
\end{equation*}%
for every $t\in S$ and $f\in C(K).$ Then $_{t}f:K\rightarrow \mathbb{C}$ is
weakly continuous since the action $\cdot $ is weakly separately continuous
and
\begin{equation*}
\Phi (f)=\Phi (_{t}f).
\end{equation*}%
Let $\mu $ be the probability Radon measure on $K$ corresponding to $\Phi .$
Then $\mu (A)=\mu (s^{-1}A)$ for every Borel subset $A$ of $K$ (with the
weak topology) and $s\in S$, where as usual, $s^{-1}A=\left\{ x\in K:s\cdot
x\in A\right\} .$ Define $K_{0}=\mathrm{supp}(\mu )$ and notice that $\mu
(s^{-1}K_{0})=\mu (K_{0})=1.$ Hence $K_{0}\subset s^{-1}K_{0}$ since $%
s^{-1}K_{0}$ is weakly closed. Similarly, $\mu (sK_{0})=\mu
(s^{-1}(sK_{0}))=\mu (K_{0})=1$ and consequently $K_{0}\subset sK_{0}$
(notice that $sK_{0}$ is weakly compact). Thus $sK_{0}=K_{0}$ for every $%
s\in S.$ Moreover, $K_{0}$ is nonempty, weakly compact and separable by
Theorem \ref{Rad}.
\end{proof}

We can now state a general fixed point theorem for a semitopological
semigroup $S$. The proof follows \cite[Theorem 5.3]{LaTa}. We sketch it for
the convenience of the reader.

\begin{theorem}
\label{Main}Let $S$ be a semitopological semigroup and $X$ a closed linear
subspace of $\ell ^{\infty }(S)$ containing constants and invariant under
translations. Suppose that $S$ acts on a weakly compact subset $K$ of a
Banach space $E$ so that the action is weakly separately continuous,
nonexpansive and the function $S\ni s\rightarrow f_{y}(s)=f(s\cdot y)$
belongs to $X$ for every $y\in K$ and every $f\in C(K).$ If $X$ has a left
invariant mean then there is a common fixed point of $S$ in $K.$
\end{theorem}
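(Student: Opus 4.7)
The plan is to apply Lemma \ref{Lem} combined with a Zorn minimality argument following \cite[Theorem 5.3]{LaTa}. First I would invoke Zorn's lemma on the family $\mathcal{F}$ of nonempty weakly compact convex subsets $F \subset K$ satisfying $s \cdot F \subset F$ for every $s \in S$, ordered by reverse inclusion. The family $\mathcal{F}$ is nonempty since $K \in \mathcal{F}$, and every chain has a nonempty intersection by weak compactness, so Zorn's lemma furnishes a minimal element $F_0 \in \mathcal{F}$.

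Next I would apply Lemma \ref{Lem} to the restriction of the action to $F_0$. The hypotheses transfer: the action remains weakly separately continuous and nonexpansive, and for $y \in F_0$ and $f \in C(F_0)$ the function $s \mapsto f(s \cdot y)$ belongs to $X$ because $f$ extends weakly continuously to $K$ by the Tietze extension theorem (the weakly compact Hausdorff space $K$ is normal and $F_0$ is weakly closed in $K$). This yields a nonempty weakly compact, norm separable subset $K_0 \subset F_0$ with $sK_0 = K_0$ for every $s \in S$. Setting $C := \overline{\conv}(K_0) \subset F_0$, the Krein--Smulian theorem ensures that $C$ is weakly compact, and $C$ is norm separable, hence its weak topology is metrizable.

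The main obstacle is to conclude from these ingredients that $F_0$ reduces to a single point $x^{\ast}$, which will then be the desired common fixed point. The strategy is to produce an element of $\mathcal{F}$ that is strictly smaller than $F_0$ whenever $F_0$ has more than one point, contradicting minimality. The subtle issue is that the maps $x \mapsto s \cdot x$ are nonexpansive but not affine, so $s \cdot C$ need not sit inside $C$ even though $sK_0 = K_0 \subset C$; one cannot simply check $C \in \mathcal{F}$ directly. The remedy is to exploit the strong invariance $sK_0=K_0$ together with the metrizability of the weak topology on the separable set $C$, using a Chebyshev/asymptotic-center type construction inside $C$ (analogous to the closing argument of \cite[Theorem 5.3]{LaTa}) to build a strictly smaller member of $\mathcal{F}$ from the assumption that $F_0$ is not a singleton. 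This is precisely the step where the norm separability extracted via Grothendieck's theorem and Lemma \ref{Lem} is indispensable, and it is the part of the argument I expect to require the most care.
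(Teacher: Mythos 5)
Your setup coincides with the paper's first steps: Zorn's lemma gives a minimal weakly compact \emph{convex} invariant set, and Lemma \ref{Lem} (with a Tietze extension of functions in $C(F_0)$ to $K$, exactly as in the paper) produces a weakly compact, norm-separable $K_0$ with $sK_0=K_0$. But your proof stops precisely where the real work begins, and the ingredients you have assembled are not enough to finish it the way you suggest. The ``Chebyshev/asymptotic-center type construction'' you defer to is not a routine adaptation: in a general Banach space a weakly compact convex norm-separable set need not have normal structure, and asymptotic-center arguments need geometric hypotheses (uniform convexity, Opial-type conditions) that are absent here. Metrizability of the weak topology on $\overline{\conv}(K_0)$ buys you nothing; the paper never uses it. So as written there is a genuine gap: the contradiction with minimality of $F_0$ is asserted, not proved.

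What actually closes the argument in the paper is a chain of three steps you are missing. First, a \emph{second} application of Zorn inside the minimal convex set $C$: one takes a minimal weakly compact invariant subset $F$ of $C$ that need \emph{not} be convex, and applies Lemma \ref{Lem} to $F$; minimality then forces $K_0=F$, so $F$ itself is norm-separable and every orbit $\{s\cdot x:s\in S\}$ is weakly dense in $F$. Second, these two properties feed into \cite[Lemma 5.2]{LaTa} (the fragmentability argument going back to Hsu and Namioka), which upgrades $F$ from weakly compact to \emph{norm-compact}; this, not weak metrizability, is the decisive use of the separability coming from Grothendieck's theorem. Third, with $F$ norm-compact, DeMarr's lemma \cite[Lemma 1]{De} yields $u\in \overline{co}\,F\subset C$ with $r_{0}=\sup \{\left\Vert u-y\right\Vert :y\in F\}<\diam F$, and the set $C_{0}=\{x\in C:\left\Vert x-y\right\Vert \leq r_{0}\text{ for all }y\in F\}$ is a nonempty, proper, weakly compact convex subset of $C$; it is invariant because the action is nonexpansive and $sF=F$ (the equality from Lemma \ref{Lem}, not mere inclusion, is what makes this computation work), contradicting the minimality of $C$. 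Note that DeMarr's lemma genuinely requires norm-compactness of $F$ -- this is why the detour through the non-convex minimal set and fragmentation cannot be bypassed, and why your plan of working only with the convex $F_0$, $K_0$ and asymptotic centers would fail.
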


\begin{proof}
By Kuratowski-Zorn's lemma, there exists a nonempty minimal weakly compact
and convex subset $C$ of $K$ which is invariant under $S.$ Let $F$ be a
nonempty minimal weakly compact subset of $C$ which is invariant under $S.$
Fix $y\in F$ and notice that if $f\in C(F),$ then $f_{y}=\bar{f}_{y}\in X,$
where $\bar{f}:K\rightarrow \mathbb{C}$ is an extension of $f$ to the whole
set $K.$ Applying Lemma \ref{Lem} there exists a weakly compact and
norm-separable subset $K_{0}$ of $F$ such that $sK_{0}=K_{0}$ for every $%
s\in S.$ From minimality of $F$, $K_{0}=F$ is separable and $\left\{ s\cdot
x:s\in S\right\} $ is weakly dense in $F$ for every $x\in F.$ Thus $F$ is
norm-compact by \cite[Lemma 5.2]{LaTa} (which in turn follows the ideas of
Hsu \cite{Hsu} related to \textquotedblleft
fragmentability\textquotedblright , see \cite{Na}). Suppose that $r=\diam %
F>0.$ Then by \cite[Lemma 1]{De}, there is $u\in \overline{co}\ F$ $\subset
C $ such that $r_{0}=\sup \{\left\Vert u-y\right\Vert :y\in F\}<r.$ Let
\begin{equation*}
C_{0}=\{x\in C:\left\Vert x-y\right\Vert \leq r_{0}\text{ for all }y\in F\}.
\end{equation*}%
Then $u\in C_{0}$ and $C_{0}$ is a weakly compact convex proper subset of $%
C. $ Since the action is nonexpansive and $sF=F$ it follows that $%
sC_{0}\subset C_{0}$ for all $s\in S$ which contradicts the minimality of $C.
$ Thus $\diam F=0$ and $F$ consists of a single point which is a common
fixed point of $S$ in $K.$
\end{proof}

Theorem \ref{Main}, together with Theorem \ref{Intr} and some results from
\cite{LaZh1, LaZh2}, allows proving theorems stated at the beginning of this
section in a compact way. The remainder of this section will be devoted to
their proofs.

\begin{proof}[Proof of Theorem \protect\ref{WAP}]
Assume that $WAP(S)$ has a LIM. Since the action is weakly separately
continuous and weakly quasi-equicontinuous, it follows from \cite[Lemma 3.2]%
{LaZh1} that the function $f_{y}(s)=f(s\cdot y),s\in S,$ belongs to $WAP(S)$
for every $f\in C(K)$ and $y\in K.$ Thus, the assumptions of Theorem \ref%
{Main} are satisfied with $X=WAP(S)$ and we obtain a common fixed point of $%
S $ in $K$.

To prove the reverse implication, we follow \cite[Prop. 6.5]{LaZh2}. Assume
that the semigroup $S$ is a monoid. It does not lose the generality, see
\cite[Lemma 6.3]{LaZh2}. Fix $f\in WAP(S)$ and let $K_{f}=\overline{co}%
\{r_{s}f:s\in S\}$ be the norm closure of the convex hull of the right orbit
of $f.$ Then $K_{f} $ is weakly compact, and notice that the mapping $%
(s,g)\rightarrow r_{s}g,s\in S,g\in K_{f},$ defines a left action of $S$ on $%
K_{f}.$ Indeed,%
\begin{equation*}
r_{s^{\prime }}r_{s}g(t)=r_{s}g(ts^{\prime })=g(ts^{\prime }s)=r_{s^{\prime
}s}g(t)
\end{equation*}%
for every $s,s^{\prime },t\in S,g\in K_{f},$ and consequently,%
\begin{equation*}
r_{s}(\alpha _{1}r_{s_{1}}f+...+\alpha _{n}r_{s_{n}}f)=\alpha
_{1}r_{ss_{1}}f+...+\alpha _{n}r_{ss_{n}}f
\end{equation*}%
so that $r_{s^{\prime }}(co\{r_{s}f:s\in S\})\subset co\{r_{s}f:s\in S\}$
for every $s^{\prime }\in S.$ Furthermore, the action is affine and
nonexpansive:%
\begin{equation*}
\sup_{t\in S}\left\Vert r_{s}g(t)-r_{s}h(t)\right\Vert =\sup_{t\in
S}\left\Vert g(ts)-h(ts)\right\Vert \leq \sup_{t\in S}\left\Vert
g(t)-h(t)\right\Vert .
\end{equation*}%
Thus $r_{s}(K_{f})\subset K_{f}$ and $r_{s}$ is weakly continuous on $K_{f}$
for every $s\in S.$ To prove that the mapping $S\ni s\rightarrow r_{s}g\in
K_{f}$ is also continuous for every fixed $g\in K_{f}$, choose a net $%
(s_{\alpha })\subset S$ converging to $s$ in the topology of $S.$ Then%
\begin{equation*}
r_{s}g(t)=g(t\lim_{\alpha }s_{\alpha })=\lim_{\alpha }g(ts_{\alpha
})=\lim_{\alpha }r_{s_{\alpha }}g(t),\ t\in S,
\end{equation*}%
and since $K_{f}$ is weakly compact, the pointwise limit $\lim_{\alpha
}r_{s_{\alpha }}g(\cdot )$ must agree with the weak limit $w$-$\lim_{\alpha
}r_{s_{\alpha }}g.$ It follows that the action $S\times K_{f}\ni
(s,g)\rightarrow r_{s}g\in K_{f}$ is weakly separately continuous.

The next step is to show that this action is also weakly
quasi-equicontinuous. We follow the argument in \cite[Theorem 3.4]{LaZh1}.
Choose a net $(s_{\alpha })\subset S$ such that $w$-$\lim_{\alpha
}r_{s_{\alpha }}g=T(g)$ for each $g\in K_{f}.$ We have to show that $%
T:K_{f}\rightarrow K_{f}$ is weakly continuous. Suppose on the contrary that
there exists a net $(g_{\beta })\subset K_{f}$ such that $w$-$\lim_{\beta
}g_{\beta }=g\in K_{f}$ but $(T(g_{\beta }))_{\beta }$ does not converge
weakly to $T(g).$ Then there exists $\varepsilon >0$, a functional $\varphi
\in E^{\ast }$ and a subnet of $(g_{\beta })$ (still denoted by $(g_{\beta
}) $) such that $\mathrm{Re}(\left\langle \varphi ,T(g_{\beta
})-T(g)\right\rangle >\varepsilon $ for all $\beta .$ By Mazur's lemma,
there is a net $(h_{\lambda })\subset co(g_{\beta })$ which converges in
norm to $g.$ But $T$ is affine since all $r_{s_{\alpha }}$ are affine and
hence $\mathrm{Re}(\left\langle \varphi ,T(h_{\lambda })-T(g)\right\rangle
>\varepsilon $ for all $\lambda .$ On the other hand,%
\begin{align*}
& \left\vert \left\langle \varphi ,T(h_{\lambda })-T(g)\right\rangle
\right\vert \leq \left\Vert \varphi \right\Vert \left\Vert w\text{-}%
\lim_{\alpha }r_{s_{\alpha }}h_{\lambda }-w\text{-}\lim_{\alpha
}r_{s_{\alpha }}g\right\Vert \\
\leq & \left\Vert \varphi \right\Vert \limsup_{\alpha }\left\Vert
r_{s_{\alpha }}h_{\lambda }-r_{s_{\alpha }}g\right\Vert \leq \left\Vert
\varphi \right\Vert \left\Vert h_{\lambda }-g\right\Vert \overset{\lambda }{%
\longrightarrow }0,
\end{align*}%
and we obtain a contradiction which shows that the action is weakly
quasi-equicontinuous. By the fixed point property $(F)$, for every $f\in
WAP(S)$ there exists $g\in K_{f}$ such that $r_{s}g=g$ for every $s\in S.$
It follows that $g(ts)=g(t)$ for every $t\in S$ and taking $t=e,$ the
identity of $S,$ we have that $g(s)=g(e)$ is a constant function. Since $%
WAP(S)$ is introverted and $K_{f}=\overline{co}^{p}\{r_{s}f:s\in S\}$
contains a constant function for every $f\in WAP(S),$ the conclusion follows
from Theorem \ref{Intr}.\smallskip
\end{proof}

\begin{proof}[Proof of Theorem \protect\ref{AP}]
Assume that $AP(S)$ has a LIM. Since the action is weakly separately
continuous and weakly equicontinuous, $f_{y}\in AP(S)$ for every $f\in C(K)$
and $y\in K$ by \cite[Lemma 3.1]{La}. Applying Theorem \ref{Main} with $%
X=AP(S)$ we get a common fixed point of $S$ in $K.$

To prove the converse, assume that $S$ is a monoid. Fix $f\in AP(S)$ and let
$K_{f}=\overline{co}\{r_{s}f:s\in S\}.$ Now $K_{f}$ is compact and define,
as before, a left action of $S$ on $K_{f}$ by%
\begin{equation*}
S\times K_{f}\ni (s,g)\rightarrow r_{s}g\in K_{f}.
\end{equation*}%
The action is weakly separately continuous and nonexpansive, and hence
weakly equicontinuous since the weak topology coincides with the norm
topology on $K_{f}.$ By property $(E)$, for every $f\in AP(S)$ there exists $%
g\in K_{f}$ such that $r_{s}g=g$ for every $s\in S.$ The rest of the proof
runs as before.\smallskip
\end{proof}

\begin{proof}[Proof of Theorem \protect\ref{WAPLUC}]
By assumption, the action is jointly weakly continuous and weakly
quasi-equicontinuous, hence $f_{y}\in WAP(S)$ for every $f\in C(K)$ and $%
y\in K$ as in the proof of Theorem \ref{WAP}. Furthermore, $f_{y}\in LUC(S)$
by \cite[Lemma 5.1]{LaTa} (joint weak continuity is enough here). Therefore,
if $WAP(S)\cap LUC(S)$ has a LIM, we can apply Theorem \ref{Main} with $%
X=WAP(S)\cap LUC(S)$ to obtain a common fixed point of $S$ in $K$.\smallskip
\end{proof}

\begin{proof}[Proof of Theorem \protect\ref{LUC}]
If the action is jointly weakly continuous then $f_{y}\in LUC(S)$ for every $%
f\in C(K)$ and $y\in K$ by \cite[Lemma 5.1]{LaTa}. Since the action is also
nonexpansive and $LUC(S)$ has a left invariant mean, the result follows from
Theorem \ref{Main}.
\end{proof}

\smallskip

Recall that a bounded convex subset $C$ of a Banach space has normal
structure if $r(C)<\diam C$, where $r(C)=\inf_{x\in C}\sup_{x\in C}\Vert
x-y\Vert $ is the Chebyshev radius of $C.$ Clearly, Theorem \ref{AP} shows
that if $AP(S)$ has a left invariant mean then a semitopological semigroup $%
S $ has the following property:

\begin{enumerate}
\item[$(E^{\prime })$:] Whenever $S$ acts on a weakly compact convex subset $%
K$ of a Banach space with normal structure and the action is weakly
separately continuous, weakly equicontinuous and nonexpansive, then $K$
contains a common fixed point for $S.$
\end{enumerate}

Notice that since norm-compact convex subsets of Banach spaces have normal
structure, we can follow the proof of Theorem \ref{AP} and show that the
reverse implication is also true. Thus property $(E^{\prime })$ is
equivalent to property $(E).$ A similar result was proved in \cite{LaZh1}
for separable semigroups in the case of locally convex spaces.

The results of this paper have a natural generalization to semigroups acting
on weak-star compact convex subsets of a dual Banach space with the
Radon-Nikodym property and, more generally, on norm-fragmented spaces. This
problem will be studied in a subsequent publication.

\end{document}